\newtheorem{thm}{Theorem}[section]
\newtheorem{cor}[thm]{Corollary}
\newtheorem{prop}[thm]{Proposition}
\theoremstyle{definition}
\theoremstyle{remark}
\newtheorem{rem}[thm]{Remark}
\begin{document}
\title[Ruled CR--submanifolds of l.c.K. manifolds]{Ruled CR--submanifolds of locally conformal K\"{a}hler manifolds}

\author[G.E. V\^\i lcu]{Gabriel Eduard V\^\i lcu}

\begin{abstract} The purpose of this paper is to study the canonical totally real foliations of
CR--submanifolds in a locally conformal K\"{a}hler manifold.\\
{\em 2010 Mathematics Subject Classification:} 53C15. \\
{\em Keywords:} locally conformal K\"{a}hler structure, ruled submanifold, CR--submanifold, distribution, foliation.
\end{abstract}
\maketitle \thispagestyle{empty}

\section{Introduction}

The concept of CR--submanifold, first introduced in K\"{a}hler geometry by A. Bejancu \cite{BEJ}, was later considered
and studied in locally conformal K\"{a}hler ambient by many authors (see e.g. \cite{BAR,BM,CF,DR,DR3,DS,MATS,MUN,ORN,PAP,SG}).
Such a submanifold
comes naturally equipped with some canonical foliations, which were first investigated by B.Y. Chen and P. Piccinni \cite{CP} (see also Chapter 12 from the monograph \cite{DO}). One of these foliations, denoted by $\mathfrak{F}^\perp$ and called the totally real foliation, is given by the totally real distribution involved in the definition of the CR--submanifold, proven to be always completely integrable by D.E. Blair and B.Y. Chen \cite{BLCH}. On the other hand, A. Bejancu and H.R. Farran \cite[Chapter 5]{BJCF} investigated the relationship between the geometry of the totally real foliation on a CR--submanifold of a K\"{a}hler manifold and the geometry of the CR--submanifold itself, stressing on the links between the foliation and the complex structure on the embedding manifold (see also the monograph \cite{BDD} for an excellent survey concerning foliations in CR geometry). Moreover, they also used the theory of ruled submanifolds (see \cite{ROV} for a detailed survey on the topic) to characterize some classes of CR--submanifolds in K\"{a}hler manifolds. At the end of the chapter, the authors have proposed, as an interesting and useful research, the extension of this study to CR--submanifolds of manifolds endowed with various geometric structures. This was done recently for quaternionic and paraquaternionic K\"{a}hler ambient \cite{IIV,IMAV,VIL}.  In this paper, following the same techniques, we study the CR--submanifolds in a locally conformal K\"{a}hler manifold. In particular, we obtain necessary and sufficient conditions for a CR--submanifold of a  locally conformal K\"{a}hler manifold to be ruled with respect to the totally real foliation $\mathfrak{F}^\perp$. In the last part of the paper characterizations are provided for this foliation to become Riemannian, i.e. with bundle--like metric.

\section{Preliminaries}

Let $(\overline{M},J,\overline{g})$ be an almost Hermitian manifold of dimension $2n$, where $J$ denotes the almost complex structure
and $\overline{g}$ the Hermitian metric. Then $(\overline{M},J,\overline{g})$ is called a \emph{locally conformal K\"{a}hler} (briefly l.c.K.)
manifold if for each point $p$ of
$\overline{M}$ there exists an open neighbourhood $U$ of $p$ and a positive function $f_U$ on $U$ so
that the local metric \[\overline{g}_U = {\rm exp }(-f_U)\overline{g}_{|U}\]
is K\"{a}hlerian (see \cite{L,V}). If $U = \overline{M}$, then the manifold $(\overline{M},J,\overline{g})$
is said to be a \emph{globally conformal K\"{a}hler} (briefly g.c.K.) manifold. Equivalently (see \cite{DO}), $(\overline{M},J,\overline{g})$
is l.c.K. if and only if there exists a closed 1--form $\omega$, globally
defined on $\overline{M}$, such that
\[
d\Omega=\omega\wedge\Omega,
\]
where $\Omega$ is the K\"{a}hler 2--form associated with $(J,\overline{g})$, i.e. \[\Omega(X,Y)=\overline{g}(X,JY),\] for $X,Y\in\Gamma(T\overline{M})$.
The 1--form $\omega$ is called the \emph{Lee form} and its metrically equivalent vector field $B=\omega^\sharp$, where $\sharp$ means the rising
of the indices with respect to $\overline{g}$, namely
\[
\overline{g}(X,B)=\omega(X),
\]
for all $X\in\Gamma(T\overline{M})$, is called \emph{Lee vector field}. It is known that $(\overline{M},J,\overline{g})$ is globally conformal K\"{a}hler
(respectively K\"{a}hler) if the Lee-form $\omega$ is exact (respectively $\omega=0$). It is also known that Levi--Civita connections $D^U$ of the local metrics $\overline{g}_U$ glue up to a globally defined torsion free
linear connection $D$ on $\overline{M}$, called the \emph{Weyl connection} of the l.c.K. manifold $\overline{M}$, given by
\[
D_XY=\overline{\nabla}_XY-\frac{1}{2}\left[\omega(X)Y+\omega(Y)X-\overline{g}(X,Y)B\right]
\]
for any $X,Y\in\Gamma(T\overline{M})$, where $\overline{\nabla}$ is the Levi--Civita connection of $\overline{g}$. Moreover, Weyl connection $D$ satisfies
$D\overline{g}=\omega\otimes\overline{g}$ and $DJ=0$. As a consequence, considering the anti--Lee form $\theta=\omega\circ J$
and the anti--Lee vector field $A=-JB$, one can obtain a third equivalent definition in terms of the Levi--Civita connection $\overline{\nabla}$ of the metric $\overline{g}$ (see \cite{DO}). Namely, $(\overline{M},J,\overline{g})$ is l.c.K. if and only if the
following equation is satisfied for any $X,Y\in\Gamma(T\overline{M})$:
\begin{equation}\label{LC}
(\overline{\nabla}_XJ)Y=\frac{1}{2}\left[\theta(Y)X-\omega(Y)JX-\overline{g}(X,Y)A-\Omega(X,Y)B\right].
\end{equation}

A submanifold $M$ of a l.c.K. manifold $(\overline{M},J,\overline{g})$ is called a \emph{CR--submanifold} if there exists a differentiable distribution $D:p\rightarrow D_p\subset T_pM$ on $M$ satisfying the following conditions:
\begin{enumerate}
  \item[i.] $D$ is holomorphic, i.e. $JD_p$=$D_p$, for each $p\in M$;
  \item[ii.] the complementary orthogonal distribution $D^{\perp}: p\rightarrow D_p^{\perp}\subset T_pM$
is totally real, i.e. $JD_p^{\perp}\subset T_p^{\perp}M$ for each $p\in M$.
\end{enumerate}

If ${\rm dim } D_p^\perp=0$ (resp. ${\rm dim } D_p=0$), then the CR--submanifold is
said to be a \emph{holomorphic} (resp. a \emph{totally real}) submanifold.  A CR--submanifold is
called a \emph{proper} CR--submanifold if it is neither holomorphic nor totally
real.

\begin{rem}
Let $M$ be a CR--submanifold of a l.c.K. manifold $(\overline{M},J,\overline{g})$. By the definition of
a CR--submanifold we have the orthogonal decomposition
$$TM=D\oplus D^\perp.$$

Also, the normal bundle has the orthogonal decomposition
$$TM^\perp=JD^\perp\oplus\mu,$$
where $\mu$ is the subbundle of the normal bundle $TM^\perp$ which
is the orthogonal complement of $JD^\perp$. Corresponding to the last decomposition, any normal vector field $N$ can be written as
$N=N_{JD^\perp}+N_{\mu}$, where $N_{JD^\perp}$ (resp. $N_\mu$) is the $JD^\perp$-- (resp. $\mu$--) component of $N$.
It is easy to see that the subbundle $\mu$ is invariant under the action of $J$. We note that if $\mu={0}$, then the CR-submanifold is said to be an \emph{anti--holomorphic}
submanifold or a \emph{generic} submanifold.

If we denote by $\nabla$ the Levi--Civita connection on $(M,g)$, where $g$ is the induced Riemannian metric by $\overline{g}$ on $M$, then the Gauss and Weingarten formulas are given by:
\begin{equation}\label{G}
       \overline{\nabla}_XY=\nabla_XY+h(X,Y),\ \forall X,Y \in
\Gamma(TM)
       \end{equation}
and
\begin{equation}\label{W}
       \overline{\nabla}_XN=-a_NX+\nabla_{X}^{\perp}N,\ \forall X\in
\Gamma(TM),\ \forall N\in \Gamma(TM^\perp)
       \end{equation}
where $h$ is the second fundamental form of $M$, $\nabla^\perp$ is
the connection on the normal bundle and $a_N$ is the shape operator
of $M$ with respect to $N$. It is well--known that $h$ is a symmetric $F(M)$--bilinear form and $a_N$ is a self--adjoint operator, related by:
\begin{equation}\label{GW}
       g(a_NX,Y)=\overline{g}(h(X,Y),N)
       \end{equation}
for all $ X,Y\in \Gamma(TM)$ and $N\in \Gamma(TM^\perp)$.
We say (see \cite{BJCF}) that the distribution $D$ (resp. $D^\perp$) is $a_N$--\emph{invariant}, if $a_NX\in\Gamma(D)$ (resp. $a_NZ\in\Gamma(D^\perp)$)
for any $X\in \Gamma(D)$ (resp. $Z\in\Gamma(D^\perp)$).

A CR--submanifold $M$ of a l.c.K. manifold $(\overline{M},J,\overline{g})$ is called:
\begin{enumerate}
  \item[i.] $D$-\emph{geodesic} if $h(X,Y)=0$, $\forall X,Y\in\Gamma(D)$.
  \item[ii.] $D^\perp$-\emph{geodesic} if $h(X,Y)=0$, $\forall X,Y\in\Gamma(D^\perp)$.
  \item[iii.] \emph{mixed geodesic} if $h(X,Y)=0$, $\forall X\in\Gamma(D), Y\in\Gamma(D^\perp)$.
\end{enumerate}
\end{rem}

We recall now the following result which we shall need in the sequel.

\begin{thm}\label{T1}
Let $M$ be a CR--submanifold of a l.c.K. manifold $(\overline{M},J,\overline{g})$. Then:
\begin{enumerate}
  \item[i.] The totally real distribution $D^\perp$ is integrable \cite{BLCH}.
  \item[ii.] The holomorphic distribution $D$ is integrable if and only if
  \[
  \overline{g}(h(X,JY),JZ)=\overline{g}(h(JX,Y),JZ)-\Omega(X,Y)\theta(Z)
  \]
for all $X,Y\in\Gamma(D)$ and $Z\in\Gamma(D^\perp)$ \cite{BD}.
\end{enumerate}
\end{thm}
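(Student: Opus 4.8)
The plan is to deduce both parts of Theorem~\ref{T1} from the Gauss and Weingarten formulas \eqref{G}, \eqref{W}, \eqref{GW} together with the l.c.K. identity \eqref{LC}. The recurring device is to move $J$ across the Hermitian metric: for $U,V\in\Gamma(T\overline{M})$ one has $\overline{g}(JU,JV)=\overline{g}(U,V)$, hence $\overline{g}(U,JV)=-\overline{g}(JU,V)$, while $J\overline{\nabla}_XU=\overline{\nabla}_X(JU)-(\overline{\nabla}_XJ)U$; substituting \eqref{LC} for $(\overline{\nabla}_XJ)U$ then converts each ambient covariant derivative into second fundamental form terms plus first--order terms carrying $\omega$ and $\theta$.

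For part (i), I would fix $Z,W\in\Gamma(D^\perp)$ and $X\in\Gamma(D)$ and, since $TM=D\oplus D^\perp$ with $JX\in\Gamma(D)$, reduce integrability of $D^\perp$ to proving $g([Z,W],JX)=0$. Starting from $\overline{g}(W,JX)=0$ and expanding $\overline{\nabla}_Z(JX)=(\overline{\nabla}_ZJ)X+J\overline{\nabla}_ZX$,
\[
\overline{g}(\overline{\nabla}_ZW,JX)=-\overline{g}\big(W,(\overline{\nabla}_ZJ)X\big)+\overline{g}\big(JW,h(Z,X)\big),
\]
where I use that $JW$ is normal and that \eqref{LC} gives $\overline{g}(W,(\overline{\nabla}_ZJ)X)=\frac{1}{2}\theta(X)g(Z,W)$ (all remaining terms of \eqref{LC} pair a vector of $D^\perp$ with one of $D$, $JD$ or $JD^\perp$, hence vanish). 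Antisymmetrising in $Z,W$ removes the symmetric term $\frac{1}{2}\theta(X)g(Z,W)$, so by symmetry of $h$,
\[
g([Z,W],JX)=\overline{g}\big(h(X,Z),JW\big)-\overline{g}\big(h(X,W),JZ\big).
\]
The proof would then close with the identity $\overline{g}(h(X,Z),JW)=\overline{g}(h(X,W),JZ)$: writing $\overline{g}(h(X,Z),JW)=\overline{g}(\overline{\nabla}_XZ,JW)=-\overline{g}(\overline{\nabla}_X(JZ),W)+\overline{g}((\overline{\nabla}_XJ)Z,W)$, the Weingarten formula \eqref{W} applied to the normal field $JZ$ together with \eqref{GW} gives $-\overline{g}(\overline{\nabla}_X(JZ),W)=g(a_{JZ}X,W)=\overline{g}(h(X,W),JZ)$, while \eqref{LC} forces $\overline{g}((\overline{\nabla}_XJ)Z,W)=0$ because each of its terms couples a vector of $D$ (or $JD$) with one of $D^\perp$ (or $JD^\perp$). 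Hence $g([Z,W],JX)=0$.

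For part (ii), I would fix $X,Y\in\Gamma(D)$ and $Z\in\Gamma(D^\perp)$; since $TM=D\oplus D^\perp$, $D$ is integrable exactly when $g([X,Y],Z)=0$ for all such $X,Y,Z$. From $\overline{g}(\overline{\nabla}_XY,Z)=\overline{g}(J\overline{\nabla}_XY,JZ)$, $J\overline{\nabla}_XY=\overline{\nabla}_X(JY)-(\overline{\nabla}_XJ)Y$ and \eqref{G} one obtains
\[
\overline{g}(\overline{\nabla}_XY,Z)=\overline{g}\big(h(X,JY),JZ\big)-\overline{g}\big((\overline{\nabla}_XJ)Y,JZ\big),
\]
and \eqref{LC}, using $\overline{g}(JX,JZ)=\overline{g}(X,Z)=0$, $\overline{g}(X,JZ)=0$, $\overline{g}(A,JZ)=-\omega(Z)$ and $\overline{g}(B,JZ)=\theta(Z)$, evaluates the last term as $\frac{1}{2}\overline{g}(X,Y)\omega(Z)-\frac{1}{2}\Omega(X,Y)\theta(Z)$. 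Antisymmetrising in $X,Y$ cancels the symmetric part $\overline{g}(X,Y)\omega(Z)$, doubles $\Omega(X,Y)$, and yields, via symmetry of $h$,
\[
g([X,Y],Z)=\overline{g}\big(h(X,JY),JZ\big)-\overline{g}\big(h(JX,Y),JZ\big)+\Omega(X,Y)\theta(Z);
\]
setting the right--hand side equal to zero is precisely the stated condition.

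None of the steps is individually hard; the effort lies in the bookkeeping. The delicate point is tracking which pairings vanish because they couple a tangent vector of $D$ (or $JD$) with a normal vector of $JD^\perp$, and --- in contrast with the K\"{a}hler case, where the analogue of (ii) has no correction term --- retaining the single term $\Omega(X,Y)\theta(Z)$ that survives antisymmetrisation in (ii), this being the residue of the Lee and anti--Lee contributions in \eqref{LC}. (Part (i) is due to Blair--Chen \cite{BLCH} and part (ii) to \cite{BD}; the above is the route I would follow.)
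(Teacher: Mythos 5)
Your proof is correct. The paper itself does not prove Theorem~\ref{T1} --- it is recalled from \cite{BLCH} and \cite{BD} --- but your computations are exactly the standard argument and use the same device the paper employs in its own proofs (Proposition~\ref{T2}, Theorems~\ref{T4} and~\ref{T5}): move $J$ across $\overline{g}$, substitute \eqref{LC} for $(\overline{\nabla}J)$, and reduce to Gauss--Weingarten terms; all the vanishing pairings and the surviving $\Omega(X,Y)\theta(Z)$ term check out.
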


\section{Totally real foliation of a CR--submanifold in a locally conformal K\"{a}hler manifold}

Let $M$ be a CR--submanifold of a l.c.K. manifold $(\overline{M},J,\overline{g})$. From Theorem \ref{T1} we have that the
distribution $D^\perp$ is always integrable and gives rise to a foliation of $M$ by totally-real submanifolds of $\overline{M}$.
So any CR--submanifold of a l.c.K. manifold comes naturally equipped with a foliation denoted by $\mathfrak{F}^\perp$ and called the \emph{totally real foliation}.
We note that if the holomorphic distribution $D$ is also integrable, then $M$ carries a foliation by holomorphic submanifolds of $\overline{M}$, called the \emph{Levi foliation} (see \cite{BD,DN}).

We recall that if each leaf of a foliation $\mathfrak{F}$ on $M$ is a
totally geodesic submanifold of $M$, then we say that $\mathfrak{F}$ is a \emph{totally geodesic
foliation}. Next we state some characterizations of totally geodesic totally real foliations
on CR--submanifolds.

\begin{prop}\label{T2}
The canonical totally real foliation  $\mathfrak{F}^\perp$ on
a CR--submanifold $M$ of a l.c.K. manifold
$(\overline{M},J,\overline{g})$ is  a totally geodesic
foliation if and only if
\begin{equation}\label{rel}
\theta(Y)JX=2h_{JD^\perp}(X,Y),\ \forall X\in\Gamma(D^\perp),\ Y\in\Gamma(D).
\end{equation}
\end{prop}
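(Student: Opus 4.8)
The plan is to convert the requirement that every leaf of $\mathfrak{F}^\perp$ be totally geodesic into a pointwise identity for the second fundamental form $h$, and then to feed that identity through the l.c.K.\ structure equation \eqref{LC}. By Theorem~\ref{T1}(i) the leaves of $\mathfrak{F}^\perp$ are exactly the integral manifolds of $D^\perp$, and for such a leaf $L$ the normal bundle of $L$ in $M$ is $D|_L$; hence $L$ is totally geodesic in $M$ precisely when the $D$-component of $\nabla_Z W$ vanishes for all $Z,W\in\Gamma(D^\perp)$. Consequently $\mathfrak{F}^\perp$ is a totally geodesic foliation if and only if $g(\nabla_Z W,X)=0$ for all $Z,W\in\Gamma(D^\perp)$ and all $X\in\Gamma(D)$, so the whole argument reduces to computing this scalar (which, thanks to $g(W,X)=0$, is function-linear in all three arguments, so working with vector fields loses nothing).

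First I would use the Gauss formula \eqref{G}, noting that $h(Z,W)$ is normal while $X$ is tangent, to get $g(\nabla_Z W,X)=\overline{g}(\overline{\nabla}_Z W,X)$, and then, since $J$ preserves $\overline{g}$, rewrite this as $\overline{g}(\overline{\nabla}_Z(JW),JX)-\overline{g}((\overline{\nabla}_Z J)W,JX)$. Because $D^\perp$ is totally real, $JW\in\Gamma(TM^\perp)$, so the Weingarten formula \eqref{W} together with \eqref{GW} collapses the first term to $-\overline{g}(h(Z,JX),JW)$, the normal-connection part pairing to zero against the tangent vector $JX\in\Gamma(D)$. For the second term I would expand $(\overline{\nabla}_Z J)W$ via \eqref{LC}; the key point is that almost every summand dies on these fields: $\overline{g}(Z,JX)=0$ and $\overline{g}(JZ,JX)=\overline{g}(Z,X)=0$ since $Z\perp D$, and $\Omega(Z,W)=\overline{g}(Z,JW)=0$ since $JW$ is normal, so only the anti-Lee term survives and, using $A=-JB$ and $\theta=\omega\circ J$, contributes $\tfrac12\,\overline{g}(Z,W)\,\omega(X)$. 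This yields $g(\nabla_Z W,X)=-\overline{g}(h(Z,JX),JW)-\tfrac12\,\overline{g}(Z,W)\,\omega(X)$.

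It then remains to read off when this vanishes identically. Substituting $X\mapsto -JX$ (legitimate because $JD=D$) and using $\omega(JX)=\theta(X)$, the totally geodesic condition becomes $\overline{g}(h(Z,X),JW)=\tfrac12\,\overline{g}(Z,W)\,\theta(X)$ for all $Z,W\in\Gamma(D^\perp)$, $X\in\Gamma(D)$. Since $\mu\perp JD^\perp$ the left-hand side equals $\overline{g}(h_{JD^\perp}(Z,X),JW)$, and since $J$ is an isometry $\overline{g}(Z,W)=\overline{g}(JZ,JW)$, so the condition reads $\overline{g}\bigl(h_{JD^\perp}(Z,X)-\tfrac12\,\theta(X)\,JZ,\ JW\bigr)=0$ for every $W\in\Gamma(D^\perp)$. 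The vector in brackets lies in $JD^\perp$, on which $\overline{g}$ is positive definite and which is spanned by the $JW$, so the condition is equivalent to $2h_{JD^\perp}(Z,X)=\theta(X)JZ$; renaming the variables and using the symmetry of $h$ turns this into \eqref{rel}.

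I expect the only delicate step to be the middle one: keeping careful track of which terms of \eqref{LC} survive once the arguments are restricted to $D^\perp$ and $D$, so that the Lee and anti-Lee vector fields enter the final answer only through $\theta$ and the surviving normal component stays inside $JD^\perp$. Once $g(\nabla_Z W,X)$ has been put in the displayed form, the rest is routine bookkeeping with the nondegeneracy of $\overline{g}$ on $JD^\perp$.
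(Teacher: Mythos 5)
Your proposal is correct and follows essentially the same route as the paper: both reduce total geodesy of the leaves to the vanishing of the $D$-component of $\nabla_Z W$, transfer the computation through $J$ using the Gauss--Weingarten formulas together with the l.c.K.\ identity \eqref{LC}, observe that only the anti-Lee term survives, and conclude via nondegeneracy of $\overline{g}$ on $JD^\perp$. The only difference is cosmetic (you pair with $X\in\Gamma(D)$ and substitute $X\mapsto -JX$ at the end, whereas the paper pairs with $JY$ from the start), and your signs and surviving terms all check out.
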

\begin{proof}
For $X,Z\in\Gamma(D^\perp)$ and $Y\in\Gamma(D)$, using (\ref{LC})--(\ref{GW}), we derive:
\begin{eqnarray}
\overline{g}(J\nabla_XZ,Y)&=&-\overline{g}(\nabla_XZ,JY)\nonumber\\
                            &=&-\overline{g}(\overline{\nabla}_XZ-h(X,Z),JY)\nonumber\\
                            &=&\overline{g}(-(\overline{\nabla}_XJ)Z+\overline{\nabla}_XJZ,Y)\nonumber\\
                            &=&-\frac{1}{2}\overline{g}(\theta(Z)X-\omega(Z)JX-g(X,Z)A-\Omega(X,Y)B-2\overline{\nabla}_XJZ,Y)\nonumber\\
                            &=&-\frac{1}{2}\overline{g}(g(X,Z)JB-2\overline{\nabla}_XJZ,Y)\nonumber\\
                            &=&\frac{1}{2}g(X,Z)\overline{g}(B,JY)+\overline{g}(-a_{JZ}X+\nabla_X^{\perp}JZ,Y)\nonumber\\
                            &=&\frac{1}{2}g(X,Z)\omega(JY)-\overline{g}(h(X,Y),JZ)\nonumber\\
                            &=&\frac{1}{2}\overline{g}(\theta(Y)JX,JZ)-\overline{g}(h(X,Y),JZ).\nonumber
       \end{eqnarray}

Therefore, we obtain
\begin{equation}\label{2.1}
\overline{g}(J\nabla_XZ,Y)=\frac{1}{2}\overline{g}(\theta(Y)JX-2h_{JD^\perp}(X,Y),JZ),\ \forall X,Z\in\Gamma(D^\perp),\ Y\in\Gamma(D).
\end{equation}

If we suppose now $\mathfrak{F}^\perp$ is  a totally geodesic
foliation, then $\nabla_XZ\in\Gamma(D^\perp)$, for all $X,Z\in\Gamma(D^\perp)$, and from
(\ref{2.1}) we deduce:
\[
\overline{g}(\theta(Y)JX-2h_{JD^\perp}(X,Y),JZ)=0,\ \forall Z\in\Gamma(D^\perp)
\]
and the implication follows.

Conversely, if we suppose $\theta(Y)JX=2h_{JD^\perp}(X,Y)$, for all $X\in\Gamma(D^\perp)$, $Y\in\Gamma(D)$, then from (\ref{2.1}) we derive:
\[\overline{g}(J\nabla_XZ,Y)=0\]
and we conclude $\nabla_XZ\in\Gamma(D^\perp)$. Thus
$\mathfrak{F}^\perp$ is a totally geodesic foliation.
\end{proof}

\begin{rem}
An alternative proof of the above Proposition can be obtained using \cite[Lemma 1, p. 343]{BD}.
\end{rem}

\begin{thm}\label{T3}
Let $M$ be a CR--submanifold of a l.c.K. manifold $(\overline{M},J,\overline{g})$ such that the Lee vector field $B$ is normal to $M$. Then the next assertions are equivalent:
\begin{enumerate}
  \item[i.] The canonical totally real foliation  $\mathfrak{F}^\perp$ on $M$ is totally geodesic.
  \item[ii.] $h(X,Y)\in\Gamma(\mu)$, $\forall X\in\Gamma(D^\perp)$, $Y\in\Gamma(D)$.
  \item[iii.] The totally real distribution $D^\perp$ is $a_N$--invariant for any $N\in\Gamma(JD^\perp)$.
  \item[iv.] The holomorphic distribution $D$ is $a_N$--invariant for any $N\in\Gamma(JD^\perp)$.
\end{enumerate}
\end{thm}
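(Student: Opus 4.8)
The plan is to prove Theorem~\ref{T3} by establishing the cycle of implications (i)~$\Leftrightarrow$~(ii), (ii)~$\Leftrightarrow$~(iii), (ii)~$\Leftrightarrow$~(iv), exploiting the hypothesis $B\in\Gamma(TM^\perp)$ to simplify the master equation~\eqref{rel} from Proposition~\ref{T2}. First I would observe that when $B$ is normal to $M$, the tangential $1$-form $\omega$ restricted to $TM$ vanishes, and hence also $\theta=\omega\circ J$ vanishes on $D$: indeed for $Y\in\Gamma(D)$ we have $JY\in\Gamma(D)\subset TM$, so $\theta(Y)=\omega(JY)=\overline{g}(B,JY)=0$ since $B\perp TM$. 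Therefore the condition~\eqref{rel} of Proposition~\ref{T2} collapses to $h_{JD^\perp}(X,Y)=0$ for all $X\in\Gamma(D^\perp)$, $Y\in\Gamma(D)$, which is precisely saying that the $JD^\perp$-component of $h(X,Y)$ vanishes, i.e. $h(X,Y)\in\Gamma(\mu)$. This gives (i)~$\Leftrightarrow$~(ii) immediately, modulo the simplification of $\theta$.

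Next, for (ii)~$\Leftrightarrow$~(iii) and (ii)~$\Leftrightarrow$~(iv), the key tool is the relation~\eqref{GW}: for $N\in\Gamma(JD^\perp)$, write $N=JZ$ with $Z\in\Gamma(D^\perp)$. Then for $X\in\Gamma(D^\perp)$, $Y\in\Gamma(D)$ we have $g(a_{JZ}X,Y)=\overline{g}(h(X,Y),JZ)=\overline{g}(h_{JD^\perp}(X,Y),JZ)$. Condition (ii) says this is zero for all such $X,Y,Z$, which by the orthogonal decomposition $TM=D\oplus D^\perp$ is equivalent to $a_{JZ}X\in\Gamma(D^\perp)$ for all $X\in\Gamma(D^\perp)$, i.e. to $D^\perp$ being $a_N$-invariant — this is (iii). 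Similarly, testing against $Y\in\Gamma(D)$ in the other slot: for $Y,Y'\in\Gamma(D)$, $g(a_{JZ}Y,Y')=\overline{g}(h(Y,Y'),JZ)$, but this does not directly involve (ii); instead one uses that $a_{JZ}$ is self-adjoint, so $g(a_{JZ}Y,X)=g(Y,a_{JZ}X)$ for $X\in\Gamma(D^\perp)$, and combined with the fact that $a_N$ maps $TM$ into $TM$, the vanishing of the mixed term $g(a_{JZ}X,Y)$ is equivalent to both $a_{JZ}X\in\Gamma(D^\perp)$ and $a_{JZ}Y\in\Gamma(D)$. Thus (ii), (iii), (iv) are all equivalent via self-adjointness of the shape operator together with the orthogonal splitting of $TM$.

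The main obstacle I anticipate is making sure the reduction of $\theta$ on $D$ is airtight and handling the $\mu$-components cleanly: one must be careful that $h(X,Y)$ for $X\in\Gamma(D^\perp)$, $Y\in\Gamma(D)$ a priori has components in both $JD^\perp$ and $\mu$, and that the self-adjointness argument for the equivalence (iii)~$\Leftrightarrow$~(iv) genuinely needs the $a_N$-invariance phrased for \emph{all} $N\in\Gamma(JD^\perp)$ rather than a fixed one. It may also be worth double-checking whether the hypothesis ``$B$ normal to $M$'' is used anywhere beyond forcing $\theta|_D=0$; I expect it is not, so the heart of the proof is really the linear-algebra juggling of $h$, $a_N$, and the two distributions, with Proposition~\ref{T2} doing the differential-geometric work. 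I would therefore structure the write-up as: (1) show $\theta|_D=0$ and deduce (i)~$\Leftrightarrow$~(ii) from Proposition~\ref{T2}; (2) use~\eqref{GW} and self-adjointness of $a_N$ to show (ii), (iii), (iv) are mutually equivalent.
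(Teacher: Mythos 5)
Your proposal is correct and follows essentially the same route as the paper: use $B\perp TM$ to get $\theta|_D=0$, so the criterion of Proposition~\ref{T2} reduces to $h_{JD^\perp}(X,Y)=0$, giving (i)~$\Leftrightarrow$~(ii), and then obtain (ii)~$\Leftrightarrow$~(iii)~$\Leftrightarrow$~(iv) from~(\ref{GW}) and the self-adjointness of the shape operator. (Minor aside: the equivalence (iii)~$\Leftrightarrow$~(iv) already holds for each fixed $N$, so the quantifier over all $N\in\Gamma(JD^\perp)$ is not actually needed there.)
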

\begin{proof}
Since $B$ is normal to $M$, we deduce
\[
\theta(Y)=\omega(JY)=\overline{g}(JY,B)=0
\]
for any $Y\in\Gamma(D)$. Therefore, from the above Proposition we obtain (i) $\Leftrightarrow$ (ii).

The equivalence of (ii) and (iii) follows easily from (\ref{GW}), while
the equivalence of (iii) and (iv) holds because $a_N$ is a self--adjoint operator.
\end{proof}

\begin{rem}
We note that Theorem \ref{T3} extends Theorem 4.1 in \cite[p. 247]{BJCF} from the case of an ambient K\"{a}hlerian
manifold to the case of an ambient 1.c.K. manifold.
\end{rem}

\begin{cor}\label{C4}
Let $M$ be a CR--submanifold of a l.c.K. manifold $(\overline{M},J,\overline{g})$ such that the Lee vector field $B$ is normal to $M$. Then:
\begin{enumerate}
  \item[i.] If $M$ is mixed geodesic, then the totally real foliation  $\mathfrak{F}^\perp$ on $M$ is totally geodesic.
  \item[ii.] If $M$ is an anti--holomorphic submanifold, then $M$ is mixed geodesic if and only if the totally real foliation  $\mathfrak{F}^\perp$ is totally geodesic.
\end{enumerate}
\end{cor}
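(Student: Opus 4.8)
The plan is to deduce Corollary \ref{C4} directly from Theorem \ref{T3}, using only the characterizations already established there, so that no new computation with (\ref{LC}) is needed. Throughout we work under the standing hypothesis that the Lee vector field $B$ is normal to $M$, so that Theorem \ref{T3} applies; in particular $\theta(Y)=0$ for every $Y\in\Gamma(D)$, and the second fundamental form splits as $h(X,Y)=h_{JD^\perp}(X,Y)+h_\mu(X,Y)$.

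For part (i), suppose $M$ is mixed geodesic, i.e.\ $h(X,Y)=0$ for all $X\in\Gamma(D^\perp)$, $Y\in\Gamma(D)$. Then in particular $h(X,Y)=0\in\Gamma(\mu)$ for all such $X,Y$, which is exactly condition (ii) of Theorem \ref{T3}. Hence the totally real foliation $\mathfrak{F}^\perp$ is totally geodesic. (Equivalently, one may invoke Proposition \ref{T2}: mixed geodesy forces $h_{JD^\perp}(X,Y)=0$, and since $B\perp M$ gives $\theta(Y)=0$, relation (\ref{rel}) holds trivially.)

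For part (ii), assume additionally that $M$ is anti--holomorphic, i.e.\ $\mu=\{0\}$, so $TM^\perp=JD^\perp$. The ``only if'' direction is immediate from part (i). For the ``if'' direction, suppose $\mathfrak{F}^\perp$ is totally geodesic; by Theorem \ref{T3}(ii) we have $h(X,Y)\in\Gamma(\mu)=\{0\}$ for all $X\in\Gamma(D^\perp)$, $Y\in\Gamma(D)$, which says precisely that $M$ is mixed geodesic. Thus the two conditions are equivalent.

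The argument is essentially a bookkeeping exercise once Theorem \ref{T3} is in hand, so there is no serious obstacle; the only point requiring a little care is to make explicit use of the hypothesis $B\perp M$ at the start (to legitimize the application of Theorem \ref{T3}, equivalently to kill the $\theta$--term in Proposition \ref{T2}), and, in part (ii), to record that the hypothesis $\mu=\{0\}$ is what upgrades the one--sided implication of (i) into an equivalence by collapsing the $\mu$--component of $h$.
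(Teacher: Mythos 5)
Your proposal is correct and follows exactly the paper's intended route: the paper also derives both parts of Corollary \ref{C4} directly from Theorem \ref{T3} (its proof simply states ``clear from Theorem \ref{T3}''), with part (i) coming from the implication mixed geodesic $\Rightarrow$ $h(X,Y)\in\Gamma(\mu)$ and part (ii) from the collapse $\mu=\{0\}$ in the anti--holomorphic case. Your write--up just makes these bookkeeping steps explicit.
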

\begin{proof}
The proof is clear from Theorem \ref{T3}.
\end{proof}

\begin{rem}
We note that the Corollary \ref{C4}(i.) has been also obtained using a different proof by Dragomir \cite{DR}
(see also \cite[Theorem 12.6, p. 168]{DO}). On the other hand, Corollary \ref{C4}(ii.) gives us an interesting geometric characterization of mixed
geodesic anti--holomorphic submanifolds in a l.c.K. manifold normal to the Lee vector field.
Thus, $M$ is mixed geodesic if and
only if any geodesic of a leaf of $D^\perp$ is a geodesic of $M$. On another hand, according to
Corollary \ref{C4}(i.), if $M$ is totally geodesic, then $M$ is mixed geodesic and any geodesic of a leaf of $\mathfrak{F}^\perp$ is
a geodesic of $M$ which in turn is a geodesic of $\overline{M}$. Therefore any leaf of $\mathfrak{F}^\perp$ is totally geodesic
immersed in $(\overline{M},J,\overline{g})$. It is important to note that this property is also true in K\"{a}hler ambient (see \cite[Corollary 4.4, p. 148]{BJCF}).
\end{rem}

A submanifold $M$ of a Riemannian manifold
$(\overline{M},\overline{g})$ is said to be a \emph{ruled
submanifold} if it admits a foliation whose leaves are totally
geodesic immersed in $(\overline{M},\overline{g})$. A CR--submanifold which is a ruled submanifold with
respect to the canonical foliation $\mathfrak{F}^\perp$  is called a \emph{totally real ruled} CR--submanifold.
We are able now to state the following characterization of totally real ruled CR--submanifolds in l.c.K. manifolds.

\begin{thm}\label{T4}
Let $M$ be a CR--submanifold of a l.c.K. manifold $(\overline{M},J,\overline{g})$. Then the next assertions are equivalent:
\begin{enumerate}
  \item[i.] $M$ is a totally real ruled CR--submanifold.
  \item[ii.] $M$ is $D^\perp$-geodesic and the anti--Lee form $\theta$ and the second fundamental form $h$ of the submanifold are related by (\ref{rel}).
  \item[iii.] The second fundamental form $h$, the anti--Lee form $\theta$ and the anti--Lee vector field $A$ are related by (\ref{rel}) and satisfy:
                \begin{equation}\label{e7}
                h(X,Z)\in\Gamma(\mu),\ \forall X,Z\in\Gamma(D^\perp)
                \end{equation}
                and
                \begin{equation}\label{e8}
                \left(\nabla_X^\perp JZ\right)_\mu=-\frac{1}{2}g(X,Z)A_\mu,\ \forall X,Z\in\Gamma(D^\perp),
              \end{equation}
  where the index $\mu$ denotes the $\mu$--component of the vector field.
\end{enumerate}
\end{thm}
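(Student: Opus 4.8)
The plan is to show the chain of equivalences (i) $\Leftrightarrow$ (ii) $\Leftrightarrow$ (iii), using Proposition \ref{T2} as the main tool. Recall that $M$ is a totally real ruled CR--submanifold precisely when each leaf of $\mathfrak{F}^\perp$ is totally geodesic \emph{in} $\overline{M}$; this happens if and only if the leaf is totally geodesic in $M$ (which by Proposition \ref{T2} is equivalent to relation (\ref{rel})) \emph{and} $M$ itself is totally geodesic along the leaf directions, i.e. $h(X,Z)=0$ for all $X,Z\in\Gamma(D^\perp)$, which is exactly the $D^\perp$--geodesic condition. This gives (i) $\Leftrightarrow$ (ii) almost immediately: for a leaf $L$ with second fundamental form $h^L$ in $\overline{M}$, Gauss' formula applied twice yields $\overline{\nabla}_XZ = \nabla_XZ + h(X,Z)$ for $X,Z\in\Gamma(D^\perp)$, and the $D$--component of $\nabla_XZ$ together with $h(X,Z)$ comprise $h^L(X,Z)$; the former vanishes iff (\ref{rel}) holds (Proposition \ref{T2}), the latter vanishes iff $M$ is $D^\perp$--geodesic.

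For (ii) $\Leftrightarrow$ (iii) I would analyze the condition $h(X,Z)=0$ for $X,Z\in\Gamma(D^\perp)$ by decomposing $h(X,Z)$ into its $JD^\perp$-- and $\mu$--components. The plan is to compute $\overline{g}(h(X,Z),JW)$ for $W\in\Gamma(D^\perp)$ using (\ref{LC}), the Gauss and Weingarten formulas, and the identity $\overline{\nabla}_X(JZ) = (\overline{\nabla}_XJ)Z + J\overline{\nabla}_XZ$. Since $B$ need not be normal here, the terms $\theta(Z)X$, $\omega(Z)JX$, $g(X,Z)A$ and $\Omega(X,Z)B$ from (\ref{LC}) all contribute; projecting onto $JD^\perp$ and onto $\mu$ separately should show that the $JD^\perp$--component of $h(X,Z)$ vanishes identically once (\ref{rel}) is assumed (or perhaps after a short symmetrization argument using $h(X,Z)=h(Z,X)$), while the $\mu$--component of $h(X,Z)$ equals zero exactly when both (\ref{e7}) holds and (\ref{e8}) holds. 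Concretely, writing $\overline{\nabla}_XJZ = -a_{JZ}X + \nabla^\perp_XJZ$ and taking $\mu$--components, the Weingarten term $a_{JZ}X$ is tangent so it drops out, and $(\nabla^\perp_XJZ)_\mu$ must be balanced against the $A_\mu$ and $B_\mu$ contributions from (\ref{LC}); since $A=-JB$ and $\mu$ is $J$--invariant, the $B$--term contributes to the $\mu$--part only through $B_\mu$, and a careful bookkeeping should collapse everything to (\ref{e8}).

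I expect the main obstacle to be the $\mu$--component computation in (ii) $\Leftrightarrow$ (iii): one must carefully track which of the four terms in (\ref{LC}) land in $JD^\perp$ versus $\mu$, keeping in mind that $\Omega(X,Z)=\overline{g}(X,JZ)=0$ for $X,Z\in\Gamma(D^\perp)$ (since $JZ\perp TM$), that $\theta(Z)=\omega(JZ)$, and that on $D^\perp$ the vectors $X$ and $JX$ appearing in the formula have $X$ tangent and $JX$ normal in $JD^\perp$. The cancellation showing that (\ref{rel}) already forces $h_{JD^\perp}(X,Z)=0$ on all of $D^\perp\times D^\perp$ is the delicate point — this presumably follows by polarizing (\ref{2.1})-type identities or by using self--adjointness of the shape operators $a_{JW}$ together with (\ref{rel}) restricted to mixed arguments. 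Once that is in hand, the remaining equivalence $h_\mu(X,Z)=0 \Leftrightarrow (\ref{e8})$ is a direct consequence of the Weingarten decomposition, and the proof concludes by assembling the three equivalences.
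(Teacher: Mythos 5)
Your reduction of (i) $\Leftrightarrow$ (ii) is correct and is essentially the paper's argument: writing $\overline{\nabla}_XZ=\nabla^{D^\perp}_XZ+h^{D^\perp}(X,Z)+h(X,Z)$ for $X,Z\in\Gamma(D^\perp)$, the leaf is totally geodesic in $\overline{M}$ iff its second fundamental form in $M$ vanishes (equivalent to (\ref{rel}) by Proposition \ref{T2}) and $h$ vanishes on $D^\perp\times D^\perp$. The gap is in your plan for (ii) $\Leftrightarrow$ (iii): the step you yourself single out as the crux --- that (\ref{rel}) already forces $h_{JD^\perp}(X,Z)=0$ for all $X,Z\in\Gamma(D^\perp)$ --- is false, so no polarization or self--adjointness argument can deliver it. Relation (\ref{rel}) constrains only the mixed values $h(X,Y)$ with $X\in\Gamma(D^\perp)$, $Y\in\Gamma(D)$. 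Already in the K\"{a}hler case ($\theta=0$) take $M=\overline{M}_1\times S^1\subset\overline{M}_1\times\mathbb{C}$ with the product metric: the totally real foliation (by circles) is totally geodesic in $M$, so (\ref{rel}) holds, yet for $X$ tangent to $S^1$ the vector $h(X,X)$ is the curvature vector of the circle, which lies in $JD^\perp$. This is exactly why (\ref{e7}) appears in (iii) as an independent condition rather than as a consequence of (\ref{rel}). Your bookkeeping of components is also swapped: (\ref{e7}) is precisely the statement $h_{JD^\perp}(X,Z)=0$, while (\ref{e8}) is precisely the statement $h_\mu(X,Z)=0$; it is not the case that the $\mu$--component vanishes ``exactly when both (\ref{e7}) and (\ref{e8}) hold''.

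The correct version of your computation is short: taking $\mu$--components in $\overline{\nabla}_XJZ=(\overline{\nabla}_XJ)Z+J\overline{\nabla}_XZ$ and using (\ref{LC}), (\ref{G}), (\ref{W}), together with $\Omega(X,Z)=0$, $X_\mu=(JX)_\mu=0$ and $J\mu=\mu$, gives
\begin{equation*}
\left(\nabla^\perp_XJZ\right)_\mu=Jh_\mu(X,Z)-\frac{1}{2}g(X,Z)A_\mu,\qquad X,Z\in\Gamma(D^\perp),
\end{equation*}
so that (\ref{e8}) $\Leftrightarrow$ $h_\mu(X,Z)=0$, and then (ii) $\Leftrightarrow$ (iii) follows at once, with (\ref{rel}) playing no role in this step. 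Note that the paper organizes things differently: it proves (i) $\Leftrightarrow$ (iii) directly by computing the projections of $\overline{\nabla}_XZ$ on $D$, $JD^\perp$ and $\mu$ (identities (\ref{e9}), (\ref{e10}), (\ref{e11})), which correspond respectively to (\ref{rel}), (\ref{e7}) and (\ref{e8}); your chain (i) $\Leftrightarrow$ (ii) $\Leftrightarrow$ (iii) is viable, but only after the component analysis above replaces the incorrect claim.
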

\begin{proof}
i. $\Leftrightarrow$ ii. For any $X,Z\in\Gamma(D^\perp)$ we have:
\begin{eqnarray}
       \overline{\nabla}_XZ&=&\nabla_XZ+h(X,Z)\nonumber\\
       &=&\nabla_X^{D^\perp}Z+h^{D^\perp}(X,Z)+h(X,Z)\nonumber
       \end{eqnarray}
and thus we conclude that the leafs of $D^\perp$ are totally
geodesic immersed in $\overline{M}$ if and only if $h^{D^\perp}=0$ and $M$ is
$D^\perp$-geodesic.
The equivalence follows now easily from Proposition \ref{T2}.\\
i. $\Leftrightarrow$ iii. For  $X,Z\in\Gamma(D^\perp)$, and
$U\in\Gamma(D)$ we obtain similarly as in the proof of Proposition \ref{T2}:
\begin{eqnarray}\label{e9}
\overline{g}(\overline{\nabla}_XZ,U)&=&\overline{g}(J\overline{\nabla}_XZ,JU)\nonumber\\
&=&\overline{g}(-(\overline{\nabla}_XJ)Z+\overline{\nabla}_XJZ,JU)\nonumber\\
&=&\frac{1}{2}\overline{g}(\theta(JU)JX-2h_{JD^\perp}(X,JU),JZ).
       \end{eqnarray}

On the other hand, if $X,Z,W\in\Gamma(D^\perp)$, then taking account of (\ref{G}) we deduce:
\begin{eqnarray}\label{e10}
       \overline{g}(\overline{\nabla}_XZ,JW)&=&\overline{g}(\nabla_XZ+h(X,Z),JW)\nonumber\\
       &=&\overline{g}(h(X,Z),JW).
       \end{eqnarray}

If we consider now $X,Z\in\Gamma(D^\perp)$ and $N\in\Gamma(\mu)$, then making use of (\ref{LC}) and (\ref{W}) we derive:
\begin{eqnarray}
\overline{g}(\overline{\nabla}_XZ,N)&=&\overline{g}(J\overline{\nabla}_XZ,JN)\nonumber\\
&=&\overline{g}(-(\overline{\nabla}_XJ)Z+\overline{\nabla}_XJZ,JN)\nonumber\\
       &=&-\frac{1}{2}\overline{g}((\theta(Z)X-\omega(Z)JX-g(X,Z)A-\Omega(X,Z)B)-2\overline{\nabla}_XJZ,JN)\nonumber\\
       &=&\frac{1}{2}\overline{g}(g(X,Z)A+2\overline{\nabla}_XJZ,JN)\nonumber\\
       &=&\frac{1}{2}\overline{g}(g(X,Z)A+2\nabla_X^{\perp}JZ,JN)\nonumber
       \end{eqnarray}
and thus we obtain:
\begin{equation}\label{e11}
\overline{g}(\overline{\nabla}_XZ,N)=\frac{1}{2}\overline{g}(g(X,Z)A_\mu+2\left(\nabla^\perp_X
JZ\right)_\mu,JN).
\end{equation}

Finally, $M$ is a totally real ruled CR--submanifold of $(\overline{M},J,\overline{g})$ if and only if
$\overline{\nabla}_X Z\in\Gamma(D^\perp)$, $\forall
X,Z\in\Gamma(D^\perp)$ and by using (\ref{e9}), (\ref{e10}) and
(\ref{e11}) we deduce the equivalence.
\end{proof}

\begin{cor}\label{c8}
If $M$ is a CR--submanifold of a l.c.K. manifold $(\overline{M},J,\overline{g})$ such that $B\in\Gamma(JD^\perp)$, then the next assertions
are equivalent:
\begin{enumerate}
  \item[i.] $M$ is a totally real ruled CR--submanifold.
  \item[ii.] $M$ is $D^\perp$-geodesic and the second fundamental form
satisfies \[h(X,Y)\in\Gamma(\mu),\ \forall X\in\Gamma(D^\perp),\ Y\in\Gamma(D).\]
  \item[iii.] The subbundle $JD^\perp$ is $D^\perp$-parallel, i.e:
                \[\nabla_X^\perp JZ\in\Gamma(JD^\perp),\ \forall X,Z\in \Gamma(D^\perp)\]
                and the second fundamental form satisfies
                \[h(X,Y)\in\Gamma(\mu),\ \forall X\in\Gamma(D^\perp),\ Y\in\Gamma(TM).\]
  \item[iv.]  The shape operator satisfies
                \[a_{JZ}X=0,\ \forall X,Z\in \Gamma(D^\perp)\]
and
                \[a_N X\in\Gamma(D),\ \forall X\in\Gamma(D^\perp),\ N\in\Gamma(\mu).\]
\end{enumerate}
\end{cor}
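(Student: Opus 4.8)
The plan is to obtain the whole list of equivalences by specializing Theorem \ref{T4} to the present hypothesis and then closing the loop with one extra equivalence. The first step is to extract the two consequences of $B\in\Gamma(JD^\perp)$ that drive everything. First, for $Y\in\Gamma(D)$ we have $JY\in\Gamma(D)\subset\Gamma(TM)$ while $B\in\Gamma(JD^\perp)\subset\Gamma(TM^\perp)$, so $\theta(Y)=\omega(JY)=\overline{g}(JY,B)=0$; hence the relation (\ref{rel}) reduces to $h(X,Y)\in\Gamma(\mu)$ for all $X\in\Gamma(D^\perp)$, $Y\in\Gamma(D)$. Second, writing $B=JW$ with $W\in\Gamma(D^\perp)$, we get $A=-JB=-J^2W=W\in\Gamma(D^\perp)\subset\Gamma(TM)$, so $A$ has vanishing $\mu$--component and (\ref{e8}) reduces to $\left(\nabla_X^\perp JZ\right)_\mu=0$, i.e.\ $\nabla_X^\perp JZ\in\Gamma(JD^\perp)$ for all $X,Z\in\Gamma(D^\perp)$.

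With these reductions, Theorem \ref{T4} yields (i)$\Leftrightarrow$(ii)$\Leftrightarrow$(iii) essentially verbatim. Condition (ii) of Theorem \ref{T4} becomes ``$M$ is $D^\perp$--geodesic and $h(X,Y)\in\Gamma(\mu)$ for $X\in\Gamma(D^\perp)$, $Y\in\Gamma(D)$'', which is exactly condition (ii) of the Corollary. For condition (iii) of Theorem \ref{T4}, the reduced form of (\ref{rel}) together with (\ref{e7}) says $h(X,Y)\in\Gamma(\mu)$ for every $X\in\Gamma(D^\perp)$ and every $Y\in\Gamma(TM)=\Gamma(D\oplus D^\perp)$, while the reduced (\ref{e8}) is precisely the $D^\perp$--parallelism of $JD^\perp$; this is exactly condition (iii) of the Corollary. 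So the first three items are equivalent.

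It remains to prove (ii)$\Leftrightarrow$(iv), where the only tool is (\ref{GW}), $g(a_NX,Y)=\overline{g}(h(X,Y),N)$, read against the orthogonal splitting $TM^\perp=JD^\perp\oplus\mu$. Assuming (ii): for $X,Z\in\Gamma(D^\perp)$ and $Y\in\Gamma(TM)$, $g(a_{JZ}X,Y)=\overline{g}(h(X,Y),JZ)$ vanishes because $h(X,Y)=0$ when $Y\in\Gamma(D^\perp)$ and $h(X,Y)\in\Gamma(\mu)\perp\Gamma(JD^\perp)$ when $Y\in\Gamma(D)$; hence $a_{JZ}X=0$. Likewise, for $N\in\Gamma(\mu)$ and $Z\in\Gamma(D^\perp)$, $g(a_NX,Z)=\overline{g}(h(X,Z),N)=0$ by $D^\perp$--geodesy, so $a_NX\in\Gamma(D)$. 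Conversely, assuming (iv): from $a_{JZ}X=0$ for all $Z\in\Gamma(D^\perp)$ we get $\overline{g}(h(X,Y),JZ)=0$ for all $Y\in\Gamma(TM)$, i.e.\ $h(X,Y)\in\Gamma(\mu)$ for $X\in\Gamma(D^\perp)$, $Y\in\Gamma(TM)$; then from $a_NX\in\Gamma(D)$ for $N\in\Gamma(\mu)$ we get $\overline{g}(h(X,Z),N)=g(a_NX,Z)=0$ for $X,Z\in\Gamma(D^\perp)$, so $h(X,Z)$ is simultaneously in $\mu$ and orthogonal to $\mu$, forcing $h(X,Z)=0$ and hence $D^\perp$--geodesy. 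This is (ii), and the cycle is complete.

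I do not expect a genuine obstacle: the argument is a bookkeeping exercise tracking which summand of $TM^\perp=JD^\perp\oplus\mu$ each hypothesis controls, with the hypothesis $B\in\Gamma(JD^\perp)$ entering in exactly two places --- to make $\theta$ vanish on $D$ and to render the anti--Lee vector field $A$ tangent to $M$. The only care needed is to keep the two normal summands straight when translating the shape--operator statements of (iv) back into conditions on $h$.
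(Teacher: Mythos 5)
Your proof is correct and follows essentially the same route as the paper: specialize Theorem \ref{T4} using $\theta|_D=0$, and handle item (iv) via the relation $g(a_NX,Y)=\overline{g}(h(X,Y),N)$. In fact you are slightly more complete than the paper's own (very terse) proof, since you make explicit the second consequence of $B\in\Gamma(JD^\perp)$ — that $A=-JB\in\Gamma(D^\perp)$ is tangent, so $A_\mu=0$ and (\ref{e8}) reduces to the $D^\perp$--parallelism of $JD^\perp$ — which is needed for the equivalence with (iii) and is left implicit in the paper.
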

\begin{proof}
The equivalence of (i.), (ii.) and (iii.) is clear from the above theorem, since for any $Y\in\Gamma(D)$ we have
\[
\theta(Y)=g(JY,B)=0.
\]

The equivalence of (ii) and (iii.) follows from (\ref{GW}).
\end{proof}

\begin{cor}\label{c9}
Let $M$ be a CR--submanifold of a l.c.K. manifold $(\overline{M},J,\overline{g})$ such that Lee vector field $B$ is normal to $M$.
If $M$ is totally geodesic, then $M$ is a totally real ruled CR--submanifold.
\end{cor}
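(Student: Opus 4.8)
The plan is to deduce the statement immediately from Theorem \ref{T4}, using the hypotheses only to verify the two clauses of condition (ii) there. Since $M$ is totally geodesic in $\overline{M}$, its second fundamental form vanishes identically, $h\equiv 0$; in particular $M$ is $D^\perp$--geodesic (indeed it is also $D$--geodesic and mixed geodesic), so the first half of Theorem \ref{T4}(ii) holds automatically.

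Next I would check that relation (\ref{rel}) is satisfied. Its right--hand side $2h_{JD^\perp}(X,Y)$ is zero because $h\equiv 0$. For the left--hand side I use that $B$ is normal to $M$: as already noted in the proof of Theorem \ref{T3}, for every $Y\in\Gamma(D)$ one has $JY\in\Gamma(D)\subset\Gamma(TM)$, hence $\theta(Y)=\omega(JY)=\overline{g}(JY,B)=0$. Thus $\theta(Y)JX=0=2h_{JD^\perp}(X,Y)$ for all $X\in\Gamma(D^\perp)$, $Y\in\Gamma(D)$, so (\ref{rel}) holds trivially. Applying the equivalence (i) $\Leftrightarrow$ (ii) of Theorem \ref{T4} then gives that $M$ is a totally real ruled CR--submanifold.

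Alternatively, one can phrase it more geometrically: a totally geodesic $M$ is in particular mixed geodesic, so by Corollary \ref{C4}(i) (with $B$ normal) the totally real foliation $\mathfrak{F}^\perp$ is totally geodesic \emph{in $M$}, i.e. $\nabla_XZ\in\Gamma(D^\perp)$ for all $X,Z\in\Gamma(D^\perp)$; combining this with $h\equiv 0$ yields $\overline{\nabla}_XZ=\nabla_XZ\in\Gamma(D^\perp)$, which is exactly the condition for each leaf of $\mathfrak{F}^\perp$ to be totally geodesically immersed in $\overline{M}$.

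I do not expect any genuine obstacle here; the corollary is a direct consequence of Theorem \ref{T4} (or of Corollary \ref{C4}). The only point worth stating explicitly is the distinction between a leaf of $\mathfrak{F}^\perp$ being totally geodesic \emph{in $M$} and being totally geodesic \emph{in $\overline{M}$}: it is precisely the vanishing of $h$ that upgrades the former — supplied by the $B$--normal hypothesis through Corollary \ref{C4} — to the latter.
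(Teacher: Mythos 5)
Your proof is correct and follows exactly the paper's route: the paper's own proof simply says the assertion is clear from Theorem \ref{T4}, and you have just filled in the (easy) verification that $h\equiv 0$ gives $D^\perp$--geodesy while $B$ normal forces $\theta(Y)=0$ so that (\ref{rel}) holds trivially. Nothing further is needed.
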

\begin{proof}
The assertion is clear from Theorem \ref{T4}.
\end{proof}

\section{Foliations with bundle--like metric on CR--submanifolds of locally conformal K\"{a}hler manifolds}

Let $(M,g)$ be a Riemannian manifold and $\mathfrak{F}$ a foliation
on $M$. The metric $g$ is said to be bundle--like for the foliation
$\mathfrak{F}$ if the induced metric on the transversal distribution
$\mathcal{D}^\perp$ is parallel with respect to the intrinsic connection on
$\mathcal{D}^\perp$. This is true if and only if the Levi--Civita connection
$\nabla$ of $(M,g)$ satisfies (see \cite{BJCF}):
\begin{equation}\label{bl}
g(\nabla_{Q^\perp Y}QX,Q^\perp Z)+g(\nabla_{Q^\perp Z}QX,Q^\perp
Y)=0,\ \forall X,Y,Z\in\Gamma(TM),
\end{equation}
where $Q^\perp$ (resp. $Q$) is the projection morphism of $TM$ on $\mathcal{D}^\perp$ (resp $D$).

If for a given foliation $\mathfrak{F}$ there exists a Riemannian
metric $g$ on $M$ which is bundle-like for $\mathfrak{F}$, then we
say that $\mathfrak{F}$ is a Riemannian foliation on $(M,g)$.

In what follows we provide necessary
and sufficient conditions for the induced metric on a CR--submanifold of a l.c.K. manifold to be bundle--like for the totally real
foliation $\mathfrak{F}^\perp$.

\begin{thm}\label{T5}
If $M$ is a CR--submanifold of a l.c.K. manifold $(\overline{M},J,\overline{g})$, then the next assertions
are equivalent:
\begin{enumerate}
  \item[i.] The induced metric $g$ on $M$ is bundle--like for the canonical totally real
foliation $\mathfrak{F}^\perp$.
  \item[ii.] The second fundamental form $h$ of the submanifold and anti--Lee vector field $A$ satisfy:
\[g(U,V)A+h(U,JV)+h(V,JU)\in\Gamma(TM)\oplus\Gamma(\mu),\] for any $U,V\in\Gamma(D)$.
\end{enumerate}
\end{thm}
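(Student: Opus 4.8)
The plan is to apply the bundle--like criterion (\ref{bl}) to the canonical foliation $\mathfrak{F}^\perp$, whose leaves are tangent to $D^\perp$ (which is integrable by Theorem \ref{T1}) and whose transversal distribution is the holomorphic distribution $D$. Applying (\ref{bl}) with $Q^\perp$ the projection onto $D$ and $Q$ the projection onto $D^\perp$, assertion (i) becomes equivalent to
\[
g(\nabla_U W,V)+g(\nabla_V W,U)=0,\qquad W\in\Gamma(D^\perp),\ U,V\in\Gamma(D).
\]
So everything reduces to computing the symmetric expression $g(\nabla_U W,V)+g(\nabla_V W,U)$ for $W\in\Gamma(D^\perp)$ and $U,V\in\Gamma(D)$, expressing it through $h$, $\theta$ and $A$, and then isolating its $JD^\perp$--component.

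For the core computation I would fix such $W,U,V$ and argue just as in the proofs of Proposition \ref{T2} and Theorem \ref{T4}. Since $D\perp D^\perp$ and the second fundamental form takes values in $TM^\perp$, compatibility of $\overline{\nabla}$ gives $g(\nabla_U W,V)=\overline{g}(\overline{\nabla}_U W,V)=-\overline{g}(W,\overline{\nabla}_U V)$; then the Hermitian property of $\overline{g}$ yields $\overline{g}(W,\overline{\nabla}_U V)=\overline{g}(JW,J\overline{\nabla}_U V)$, and inserting $J\overline{\nabla}_U V=\overline{\nabla}_U(JV)-(\overline{\nabla}_U J)V$ splits the computation into two pieces. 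Because $JV\in\Gamma(D)$ is tangent while $JW\in\Gamma(JD^\perp)$ is normal, the Gauss formula (\ref{G}) reduces the first piece to $\overline{g}(JW,h(U,JV))$, and (\ref{LC}) evaluates $(\overline{\nabla}_U J)V$: of its four terms the ones carrying $\theta(V)U$ and $\omega(V)JU$ pair to zero against $JW$ (tangent versus normal, together with $\overline{g}(JW,JU)=\overline{g}(W,U)=0$), so only the $\overline{g}(U,V)A$ and $\Omega(U,V)B$ terms survive. Using $A=-JB$, so that $\overline{g}(JW,A)=-\omega(W)$ and $\overline{g}(JW,B)=\overline{g}(W,A)$, one obtains
\[
g(\nabla_U W,V)=-\overline{g}(JW,h(U,JV))+\frac{1}{2}g(U,V)\omega(W)-\frac{1}{2}\Omega(U,V)\overline{g}(W,A).
\]

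Symmetrizing in $U$ and $V$, the contribution $\Omega(U,V)\overline{g}(W,A)$ cancels because $\Omega$ is skew--symmetric, while the remaining term can be rewritten as $g(U,V)\omega(W)=g(U,V)\overline{g}(W,B)=-\overline{g}(JW,g(U,V)A)$ (again via $A=-JB$), which reorganizes the whole thing into the single identity
\[
g(\nabla_U W,V)+g(\nabla_V W,U)=-\overline{g}\left(JW,\ g(U,V)A+h(U,JV)+h(V,JU)\right).
\]
As $W$ runs over $\Gamma(D^\perp)$ the vector field $JW$ runs over $\Gamma(JD^\perp)$, and since $TM^\perp=JD^\perp\oplus\mu$ while $T\overline{M}_{|M}=TM\oplus JD^\perp\oplus\mu$, the right--hand side vanishes for all admissible $W,U,V$ precisely when the $JD^\perp$--component of $g(U,V)A+h(U,JV)+h(V,JU)$ is zero, that is, when $g(U,V)A+h(U,JV)+h(V,JU)\in\Gamma(TM)\oplus\Gamma(\mu)$; this is assertion (ii), so (i) $\Leftrightarrow$ (ii). The step that needs the most care is purely bookkeeping: keeping straight which pairings drop out by the orthogonalities $D\perp D^\perp$ and $TM\perp TM^\perp$, and carrying the sign conventions of (\ref{LC}) and of $A=-JB$ through correctly; once those are controlled the equivalence falls out at once.
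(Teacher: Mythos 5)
Your proposal is correct and follows essentially the same route as the paper: reduce (i) via the bundle--like criterion (\ref{bl}) to the symmetric condition $g(\nabla_U W,V)+g(\nabla_V W,U)=0$, then use (\ref{LC}) together with the Gauss/Weingarten formulas to establish exactly the paper's key identity $g(\nabla_U W,V)+g(\nabla_V W,U)=-\overline{g}\bigl(g(U,V)A+h(U,JV)+h(V,JU),JW\bigr)$, from which the equivalence with (ii) follows by letting $JW$ range over $JD^\perp$. The only (immaterial) difference is that you first move the derivative onto $V$ by metric compatibility and apply $J$ to $\overline{\nabla}_UV$ (hence use the Gauss formula), whereas the paper applies $J$ to $\overline{\nabla}_UX$ with $X\in\Gamma(D^\perp)$ and uses the Weingarten formula; both yield the same identity with correct signs.
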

\begin{proof}
From (\ref{bl}) we deduce that $g$ is bundle-like for the canonical totally real
foliation $\mathfrak{F}^\perp$ if and only if:
\begin{equation}\label{e13}
g(\nabla_U X,V)+g(\nabla_V X,U)=0,\ \forall X\in\Gamma(D^\perp),\
U,V\in\Gamma(D).
\end{equation}

On the other hand, using (\ref{LC})-(\ref{GW}), we obtain for any $X\in\Gamma(D^\perp),\ U,V\in\Gamma(D)$:
\begin{eqnarray}
       g(\nabla_U X,V)&+&g(\nabla_V
X,U)=\overline{g}(\overline{\nabla}_U X-h(U,X),V)+
       \overline{g}(\overline{\nabla}_V X-h(V,X),U)\nonumber\\
       &=&\overline{g}(\overline{\nabla}_U X,V)+
       \overline{g}(\overline{\nabla}_V X,U)\nonumber\\
       &=&\overline{g}(-(\overline{\nabla}_U
J)X+\overline{\nabla}_U JX,JV)+\overline{g}(-(\overline{\nabla}_V
J)X+\overline{\nabla}_V JX,JU)\nonumber\\
       &=&-\frac{1}{2}\overline{g}(\theta(X)U-\omega(X)JU-g(U,X)A-\Omega(U,X)B
       -2\overline{\nabla}_U JX,JV)\nonumber\\
       &&-\frac{1}{2}\overline{g}(\theta(X)V-\omega(X)JV-g(V,X)A-\Omega(V,X)B
       -2\overline{\nabla}_V JX,JU)\nonumber\\
       &=&-\frac{1}{2}\overline{g}(\theta(X)U-\omega(X)JU
       -2\overline{\nabla}_U JX,JV)\nonumber\\
       &&-\frac{1}{2}\overline{g}(\theta(X)V-\omega(X)JV
       -2\overline{\nabla}_V JX,JU)\nonumber\\
       &=&\omega(X)g(U,V)+\overline{g}(\overline{\nabla}_U JX,JV)+\overline{g}(\overline{\nabla}_V JX,JU)\nonumber\\
       &=&\omega(X)g(U,V)-g(A_{JX}U,JV)-g(A_{JX}V,JU)\nonumber\\
       &=&\omega(X)g(U,V)-\overline{g}(h(U,JV),JX)-\overline{g}(h(V,JU),JX).\nonumber
       \end{eqnarray}
and taking into account that $B=\omega^\sharp$ and $A=-JB$ we derive:
\begin{equation}\label{e14}
g(\nabla_U X,V)+g(\nabla_V X,U)=-\overline{g}(g(U,V)A+h(U,JV)+h(V,JU),JX).
\end{equation}

The proof is now complete from (\ref{e13}) and (\ref{e14}).
\end{proof}

\begin{cor}
Let $M$ be a CR--submanifold of a l.c.K. manifold $(\overline{M},J,\overline{g})$.
\begin{enumerate}
  \item[i.] If $B\in\Gamma(D)\oplus\Gamma(TM^\perp)$, then the induced metric $g$ on $M$ is bundle--like for the canonical totally real
foliation $\mathfrak{F}^\perp$ if and only if \[h(U,JV)+h(V,JU)\in\Gamma(\mu),\ \forall U,V\in\Gamma(D).\]
  \item[ii.] If $B$  has a non--vanishing component in $\Gamma(D^\perp)$, then the induced metric $g$ on $M$ is not bundle--like for the canonical totally real foliation $\mathfrak{F}^\perp$.
\end{enumerate}
\end{cor}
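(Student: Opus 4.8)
The plan is to derive both parts of the corollary directly from Theorem \ref{T5}, by analyzing the condition in assertion (ii) under each hypothesis on the Lee vector field $B$. The key observation is that the displayed expression
\[
g(U,V)A + h(U,JV) + h(V,JU)
\]
naturally splits according to the decomposition $T\overline{M}|_M = \Gamma(TM) \oplus \Gamma(JD^\perp) \oplus \Gamma(\mu)$, and the membership in $\Gamma(TM) \oplus \Gamma(\mu)$ is equivalent to the vanishing of the $JD^\perp$--component. Since $h(U,JV)$ and $h(V,JU)$ are already normal (values of the second fundamental form), their $\Gamma(TM)$--components are zero; only the anti--Lee vector field $A = -JB$ can contribute a tangential part. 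So I expect the condition to simplify, after projecting, to a statement purely about normal vector fields.

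First I would treat part (i). If $B \in \Gamma(D) \oplus \Gamma(TM^\perp)$, then since $JD = D$ and $J$ preserves $\mu$ and sends $D^\perp$ into the normal bundle, one computes that $A = -JB$ has no component along $JD^\perp$: indeed write $B = B_D + B_{JD^\perp} + B_\mu$ using $TM^\perp = JD^\perp \oplus \mu$; the hypothesis forces $B_{JD^\perp}$ to be... wait, more carefully, the hypothesis $B \in \Gamma(D)\oplus\Gamma(TM^\perp)$ only says $B$ has no $D^\perp$--component, so $B = B_D + B_{JD^\perp} + B_\mu$, and then $A = -JB = -JB_D - JB_{JD^\perp} - JB_\mu$, where $JB_D \in \Gamma(D) \subset \Gamma(TM)$, $JB_\mu \in \Gamma(\mu)$, and $JB_{JD^\perp} \in \Gamma(D^\perp) \subset \Gamma(TM)$. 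Hence $A \in \Gamma(TM) \oplus \Gamma(\mu)$ automatically, so the condition of Theorem \ref{T5}(ii) reduces to $h(U,JV) + h(V,JU) \in \Gamma(TM) \oplus \Gamma(\mu)$; since these terms are normal, this is exactly $h(U,JV) + h(V,JU) \in \Gamma(\mu)$, giving the claim. I would present this by first recording the decomposition of $A$, then invoking Theorem \ref{T5}.

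For part (ii), I would argue by contraposition against Theorem \ref{T5}(ii). Suppose $B$ has a non--vanishing $D^\perp$--component, say $B_{D^\perp} \neq 0$ on some open set. Then $A = -JB$ has $-JB_{D^\perp}$ as its component in $JD^\perp$, and this component is nonzero there. Setting $U = V = 0$ is not allowed, so instead I would test the condition with a well--chosen pair: taking $U = V$ with $g(U,U) \neq 0$, the expression becomes $g(U,U)A + 2h(U,JU)$; its $JD^\perp$--component is $g(U,U)(-JB_{D^\perp})_{JD^\perp} + 2h(U,JU)_{JD^\perp}$. This need not vanish, but to get a clean contradiction I would rather exploit that the $JD^\perp$--component of $A$ is a fixed nonzero normal field independent of $U,V$, whereas $h$ is bilinear; a cleaner route is to pair the whole expression against an arbitrary $JX$ with $X \in \Gamma(D^\perp)$ and use $\Gamma(TM) \perp JD^\perp$, reducing (ii) to $\overline{g}(g(U,V)A + h(U,JV) + h(V,JU), JX) = 0$ for all $X \in \Gamma(D^\perp)$, $U,V \in \Gamma(D)$. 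I anticipate the main obstacle is precisely here: showing that this forced vanishing, combined with $\overline{g}(A, JX) = -\overline{g}(JB, JX) = -\overline{g}(B,X) = -\omega(X)$ not identically zero, is genuinely impossible — the subtlety being that the $h$--terms could in principle cancel the $A$--term pointwise. I would resolve this by choosing $U = V$ to reduce to $g(U,U)\omega(X) = -2\overline{g}(h(U,JU), JX)$ and then choosing a second pair, or by polarization, to show the bilinear form $h$ cannot match the $U$--independent right--hand side $-\tfrac12 g(U,U)\omega(X)$ unless $\omega(X) = 0$ for all $X \in \Gamma(D^\perp)$, i.e. $B_{D^\perp} = 0$, contradicting the hypothesis. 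Hence no bundle--like metric exists, which is assertion (ii).
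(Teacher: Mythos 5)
Your part (i) is correct and is exactly the deduction the paper intends from Theorem \ref{T5}: under $B\in\Gamma(D)\oplus\Gamma(TM^\perp)$ one checks, as you do, that $A=-JB$ has vanishing $JD^\perp$--component (its pieces land in $D$, $D^\perp$ and $\mu$), so the condition of Theorem \ref{T5} reduces to the vanishing of the $JD^\perp$--component of $h(U,JV)+h(V,JU)$, i.e. to $h(U,JV)+h(V,JU)\in\Gamma(\mu)$.

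In part (ii), however, there is a genuine gap at exactly the point you flag. After pairing with $JX$ and using $\overline{g}(A,JX)=-\omega(X)$, what must be excluded is the identity $\overline{g}(h(U,JV)+h(V,JU),JX)=g(U,V)\,\omega(X)$ for all $U,V\in\Gamma(D)$ with some $\omega(X)\neq 0$. Your proposed finish rests on the assertion that the right--hand side is ``$U$--independent'', which is false (it is quadratic in $U$, just like the left--hand side), and neither ``choosing a second pair'' nor polarization alone produces a contradiction: a priori a symmetric bilinear form could very well reproduce a multiple of $g(U,V)$. The missing idea is the $J$--substitution, which uses the CR and Hermitian structure: since $JD=D$, replace $(U,V)$ by $(JU,JV)$; by $J^2=-\mathrm{id}$, $g(JU,JV)=g(U,V)$ and the symmetry of $h$, the $h$--term changes sign while the term $g(U,V)\overline{g}(A,JX)$ is unchanged, so adding the two identities yields $g(U,V)\,\omega(X)=0$ for all $U,V\in\Gamma(D)$; taking $U=V\neq 0$ (the statement tacitly assumes $D\neq\{0\}$, as does the paper) forces $\omega(X)=0$ for every $X\in\Gamma(D^\perp)$, i.e. $B$ has no $D^\perp$--component, contradicting the hypothesis. (Your sign $g(U,U)\omega(X)=-2\overline{g}(h(U,JU),JX)$ is also off --- with $A=-JB$ the correct relation has $+2$ --- but that is immaterial to the issue above.)
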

\begin{proof}
The proof follows from Theorem \ref{T5}.
\end{proof}

\begin{cor}
If $M$ is an anti-holomorphic submanifold of a l.c.K. manifold $(\overline{M},J,\overline{g})$, normal to the Lee field
of $\overline{M}$, then the induced metric $g$ on $M$ is bundle--like for the canonical totally real
foliation $\mathfrak{F}^\perp$ if and only if \[h(U,JV)+h(V,JU)=0,\ \forall U,V\in\Gamma(D).\]
\end{cor}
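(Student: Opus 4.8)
The plan is to obtain this as an immediate consequence of Theorem~\ref{T5}, or, even more economically, of part~(i) of the Corollary immediately preceding it. First I would observe that a submanifold $M$ normal to the Lee vector field satisfies $B\in\Gamma(TM^\perp)\subseteq\Gamma(D)\oplus\Gamma(TM^\perp)$, so the hypothesis of that Corollary is met. It therefore yields at once that $g$ is bundle--like for $\mathfrak{F}^\perp$ if and only if $h(U,JV)+h(V,JU)\in\Gamma(\mu)$ for all $U,V\in\Gamma(D)$.

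The second and only remaining step is to invoke the definition of an anti--holomorphic (generic) submanifold, namely $\mu=\{0\}$, equivalently $TM^\perp=JD^\perp$. Under this condition the membership requirement $h(U,JV)+h(V,JU)\in\Gamma(\mu)$ collapses to $h(U,JV)+h(V,JU)=0$, which is exactly the asserted equivalence.

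Alternatively, one can argue straight from Theorem~\ref{T5}(ii). Since $B\perp M$ we have $A=-JB$, and because $B\in\Gamma(TM^\perp)=\Gamma(JD^\perp)$ it follows that $A=-JB\in\Gamma\bigl(J(JD^\perp)\bigr)=\Gamma(D^\perp)\subseteq\Gamma(TM)$. Hence in the expression $g(U,V)A+h(U,JV)+h(V,JU)$ the first term is already tangent to $M$, while $h(U,JV)+h(V,JU)$ is always a normal vector field; with $\mu=\{0\}$ the condition $g(U,V)A+h(U,JV)+h(V,JU)\in\Gamma(TM)\oplus\Gamma(\mu)=\Gamma(TM)$ is equivalent to $h(U,JV)+h(V,JU)\in\Gamma(TM)\cap\Gamma(TM^\perp)=\{0\}$, again giving the result.

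There is no genuine obstacle in this proof; the statement is a specialization of the preceding results. The only points that deserve a moment's care are the two bookkeeping remarks that (a) ``normal to the Lee field'' is a particular case of the hypothesis $B\in\Gamma(D)\oplus\Gamma(TM^\perp)$ used in the previous corollary (so that $A=-JB$ is in fact tangent, lying in $D^\perp$), and (b) ``anti--holomorphic'' is precisely the vanishing of $\mu$, after which the equivalence is immediate.
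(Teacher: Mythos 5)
Your proof is correct and follows the same route as the paper: since $B$ normal to $M$ is a special case of $B\in\Gamma(D)\oplus\Gamma(TM^\perp)$, the preceding Corollary applies, and the anti--holomorphic hypothesis $\mu=\{0\}$ forces $h(U,JV)+h(V,JU)\in\Gamma(\mu)$ to mean $h(U,JV)+h(V,JU)=0$. The paper's own proof is just the one-line remark that this is clear from the previous Corollary, so your write-up (including the alternative check via Theorem~\ref{T5}) merely spells out the same specialization in more detail.
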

\begin{proof}
The assertion is clear from the above Corollary.
\end{proof}

\section*{Acknowledgement} This work was supported by
CNCS-–UEFISCDI, project number PN--II--ID--PCE--2011--3--0118.

Gabriel Eduard V\^{I}LCU$^{1,2}$ \\ \\
      $^1$University of Bucharest,\\
      Research Center in Geometry, Topology and Algebra,\\
      Str. Academiei, Nr. 14, Sector 1,\\
      Bucure\c sti 70109-ROMANIA\\
      e-mail: gvilcu@gta.math.unibuc.ro\\ \\
      $^2$Petroleum-Gas University of Ploie\c sti,\\
      Department of Mathematical Economics,\\
      Bd. Bucure\c sti, Nr. 39,\\
      Ploie\c sti 100680-ROMANIA\\
      e-mail: gvilcu@upg-ploiesti.ro

\end{document}